\newcommand{\abs}[1]{\lvert#1\rvert}
\newcommand{\cH}{\mathcal{H}}
\newcommand{\cL}{\mathcal{L}}
\newcommand{\cO}{\mathcal{O}}
\newcommand{\cP}{\mathcal{P}}
\newcommand{\FF}{\mathbb{F}}
\newcommand{\ip}[2]{\langle #1 \vert #2 \rangle}
\newcommand{\op}[1]{\operatorname{#1}}
\newcommand{\PP}{\mathbb{P}}
\newcommand{\QQ}{\mathbb{Q}}
\newcommand{\RR}{\mathbb{R}}
\newcommand{\ZZ}{\mathbb{Z}}
\DeclareMathOperator{\Tr}{Tr}
\providecommand{\abs}[1]{\lvert#1\rvert}
\newcommand{\arhh}[1]{\ar@{->>}[#1]}
\newcommand{\e}[1]{\ar@{-}[#1]}
\newcommand{\ed}[1]{\ar@{--}[#1]}
\newcommand{\ee}[1]{\ar@{=}[#1]}
\newcommand{\er}[1]{\ar@{-}[#1] |-{\SelectTips{cm}{}\object@{<}} |{\SelectTips{eu}{}\object@{}}}
\newcommand{\el}[1]{\ar@{-}[#1] |-{\SelectTips{cm}{}\object@{>}} |{\SelectTips{eu}{}\object@{}}}
\newcommand{\edr}[1]{\ar@{--}[#1] |-{\SelectTips{cm}{}\object@{<}} |{\SelectTips{eu}{}\object@{}}}
\newcommand{\edl}[1]{\ar@{--}[#1] |-{\SelectTips{cm}{}\object@{>}} |{\SelectTips{eu}{}\object@{}}}
 \newcommand{\lul}[1]{\ar@{}[l]_<<{#1}}
\newcommand{\rrul}[1]{\ar@{}[r]^<<<<{#1}}
\newcommand{\rul}[1]{\ar@{}[r]^<<{#1}}
\newcommand{\ldl}[1]{\ar@{}[l]^<<{#1}}
\newcommand{\rdl}[1]{\ar@{}[r]_<<{#1}}
\newcommand{\dl}[1]{\ar@{}[d]_<<{#1}}
\newcommand{\dll}[1]{\ar@{}[dd]_{#1}}
\newtheorem{theorem}{Theorem}[section]
\newtheorem{lemma}[theorem]{Lemma}
\newtheorem{corollary}[theorem]{Corollary}
\newtheorem{proposition}[theorem]{Proposition}
\theoremstyle{definition}
\newtheorem{definition}[theorem]{Definition}
\newtheorem{example}[theorem]{Example}
\newtheorem{topic}[theorem]{}
\theoremstyle{remark}
\newtheorem{remark}[theorem]{Remark}
\newtheoremstyle{head}% name
{}% Space above
{}% Space below
{\bfseries}% Body font
{}% Indent amount (empty = no indent, \parindent = para indent)
{}% Thm head font
{}% Punctuation after thm head
{.5em}% Space after thm head: " " = normal interword space;
\theoremstyle{head}
\begin{document}
%
%*******************************************************************************************************
%
%
%*******************************************************************************************************
%
\title[P2Fq]{Modular lattices from finite projective planes}
\author{Tathagata Basak}
\address{Department of Mathematics\\Iowa State University \\Ames, IA 50011}
\email{tathagat@iastate.edu}
\urladdr{http://www.iastate.edu/\textasciitilde tathagat}
\keywords{Complex Lorentzian Lattice, Modular lattice, Unimodular lattice, Leech lattice, Finite projective plane, Coxeter groups}
\subjclass[2000]{Primary 11H56,  51E20; Secondary 11E12, 11E39, 20F55}
%
%11H56: Automorphism groups of Lattices
%
%51E20   	Combinatorial structures in finite projective spaces 
%
%20F55: Reflection and Coxeter groups
%
%11E39   	Bilinear and Hermitian forms
%
%11E12   	Quadratic forms over global rings and fields
%
%20F55: Reflection and Coxeter groups
%
\date{March 2, 2012}
\begin{abstract} 
Using the geometry of the projective plane over the finite field $\FF_q$,
we construct a Hermitian Lorentzian lattice $L_{q}$ of 
dimension $(q^{2} + q + 2)$ defined over a certain number ring $\cO$ that depends on $q$. 
We show that infinitely many of these lattices are $p$-modular, that is, $p L'_{q} = L_{q}$,
where $p$ is some prime in $\cO$ such that $\abs{p}^{2} =q$.
The reflection group of the Lorentzian lattice obtained for $q = 3$ seems to
be closely related to the monster simple group via the presentation of the bimonster
as a quotient of the Coxeter group on the incidence graph of $\PP^2(\FF_3)$.
\par
The Lorentzian lattices $L_q$ sometimes lead to construction of interesting positive definite lattices.
In particular, if $q \equiv 3 \bmod 4$ is a rational prime such that $(q^2 + q + 1)$ is norm of some
element in $\QQ[\sqrt{-q}]$, then we find a $2q(q+1)$ dimensional even
unimodular positive definite integer lattice $M_{q}$ such that $\op{Aut}(M_q) \supseteq \op{PGL}(3,\FF_q)$.
We find that $M_3$ is the Leech lattice.
\end{abstract}
\maketitle
%
%*******************************************************************************************************
%
%
%*******************************************************************************************************
%
\section{Introduction}
\begin{topic}{\bf The results:}
\label{intro1}
Let $q$ be a rational prime power and $n = (q^2 + q + 1)$.
Let $\cO$ be either the ring of rational integers or the ring of integers in a quadratic
imaginary number field or Hurwitz's ring of integral quaternions.
Let $p \in \cO$ be a prime element such that
$\abs{p}^2 = q$, and $\bar{z} = z \bmod p \cO$ for all $z \in \cO$. 
Given such a triple $(\cO, p, q)$, we shall construct a Hermitian $\cO$-lattice $L_q$ 
of signature $(1,n)$ such that $\op{PGL}(3,\FF_q)$ acts naturally on $L_q$ and
$L_q \subseteq p L_q'$.
If $q$ happens to be a rational prime, then we show that $L_{q}$ is $p$-modular,
that is $L_q = p L'_q$ (see \ref{th-pmodular}).
\par
If $L_{q}$ contains a norm zero vector fixed by $\op{PGL}(3, \FF_{q})$, then
we can split $L_q$ as direct sum of a definite lattice $\Lambda_q$
and a hyperbolic cell, where $\Lambda_{q}$ is stable under $\op{PGL}(3,\FF_{q})$ action.
If $p = \sqrt{-3}$, $q = 3$ and $\cO = \ZZ[\tfrac{1 + p}{2}]$, then $\Lambda_q$
is a form of Leech lattice defined over $\cO$.
We show that if $q \equiv 3 \bmod 4$ is a rational prime and $n$ is
norm of some element in $\QQ[\sqrt{-q}]$, then $\Lambda_q$ is $p$-modular 
and $\op{Aut}(\Lambda_q) \supseteq \op{PGL}(3, \FF_q)$ (see \ref{posmod}, \ref{examples2}).
An appropriately scaled real form of $\Lambda_q$
gives us a positive definite even unimodular $2q(q+1)$ dimensional  $\ZZ$-lattice $M_q$
such that $\op{Aut}(M_q) \supseteq \op{PGL}(3, \FF_q)$ (see \ref{unimod}).
Such examples exists for $q = 3, 47, 59, 71, 131, \dotsb$. General conjectures in analytic
number theory suggest that there are infinitely many such primes.
\end{topic}
\begin{topic}{\bf Examples:} 
\label{examples1}
\begin{enumerate}
\item Let $q$ to be a rational prime; $q \equiv 3 \bmod 4$. Let $p = \sqrt{-q}$
and $\cO = \ZZ[\frac{1+ p}{2}]$. Then the assumptions in \ref{intro1} is satisfied.
So we get infinitely many $p$-modular Hermitian lattices $L_q$.
\item Among the lattices in (1), the lattice $L_3$ obtained for $q = 3$ seems to be especially interesting.
The reflection group of $L_{3}$ gives us a complex hyperbolic reflection
group in $PU(1,13)$ having finite co-volume. Thirteen is the largest dimension in which 
such an finite co-volume discrete reflection group in $PU(1,n)$ is known. 
The lattice $L_3$ and its construction given here plays a major role in an ongoing project
(see \cite{DA:Leech}, \cite{DA:Monstrous}, \cite{TB:EL}, \cite{TB:EL2}) trying to relate the complex reflection group of $L_3$
and the monster via the Conway-Ivanov-Norton presentation of the bimonster
(see \cite{CNS:B}, \cite{CCS:26}, \cite{AAI:Geom}, \cite{AAI:Y}).
The construction described here came up while studying this example. 
\item Our construction also goes through
 if $\cO$ is a ring of Hurwitz's quaternionic integers and $p = (1 - i)$. The lattice obtained in this
case is a direct sum of a quaternionic form of the Leech lattice and a hyperbolic cell.
The reflection group of this lattice has properties analogous to the reflection group of 
the lattice $L_3$ mentioned in (2); see \cite{TB:QL}.
\end{enumerate}
\end{topic}
\begin{topic}{\bf Remarks on the construction: }
\begin{itemize}
\item Suppose $z$ is a  primitive vector of norm $0$ in $L_q$ fixed by $\op{PGL}(3,\FF_q)$
or some large subgroup of $\op{PGL}(3,\FF_q)$.
The definite lattices $z^{\bot}/z$ are sometimes interesting. In the examples  (2) and (3) of \ref{examples1}
this yields the complex and quaternionic form of the Leech lattice.
\item The definition of the lattices $L_q$ given in
\ref{def-Lq} is similar to the definition of a root lattice. In this analogy,
the incidence graph of $\PP^2(\FF_q)$ plays the role of a Dynkin diagram. 
This analogy has proved to be an useful one in understanding the
reflection group of the lattice $L_3$ mentioned in \ref{examples1}(2),
and its connection with the monster (see. \cite{TB:EL}).
\item Nice lattices are often constructed using nice error correcting codes.
For example see \cite{CS:SPLAG}, pp. 197-198 and pp. 211-212.
One can view our construction in this spirit, with the code being 
given by the incidence matrix of the points and lines of $\PP^2(\FF_q)$.
\item Bacher and Venkov, in \cite{BV:LA}, constructed a $28$ dimensional integer lattices of minimal norm
 $3$ whose shortest vectors are parametrized by the Lagrangian subspaces in
 $6$ dimensional symplectic vector space over $\FF_3$. This example also 
 seems to be related to our construction (Boris Venkov, private communications).
 \item Alexey Bondal pointed out to me that the construction in this paper
 bears similarity with his method of construction of lattices in simple Lie algebras
 which are invariant under the automorphisms that preserve
 a decomposition of the Lie algebra into mutually orthogonal Cartan subalgebras.
 (for example, see  \cite{AB:IL} or \cite{KP:OD}, ch. 9).
\item The lattices that satisfy $p L' = L$ are called $p$-modular. These behave
much like unimodular lattices, for example, see \cite{NS:SE}.
Appropriately scaled real form of the lattices described in (2) and (3) of \ref{examples1}
are the even unimodular lattices $II_{2,26}$ and $II_{4,28}$ respectively.
\item The construction given in \ref{def-Lq} probably yields more examples of Hermitian lattices defined over other
rings $\cO$, for example, certain maximal orders in rational quaternion algebras.
But for simplicity of presentation we shall restrict ourselves to $\cO$ being a ring as in \ref{intro1}.
\end{itemize}
\end{topic}
\begin{topic}{\bf Plan:} We describe the construction of the lattices $L_q$ in section \ref{s-Lorentzian}.
In section \ref{s-definite}, we describe the $p$-modular definite Hermitian $\cO$-lattices
and the positive definite unimodular $\ZZ$-lattices having $\op{PGL}(3, \FF_q)$ symmetry obtained from $L_q$.
Section \ref{s-inc} contains quick proofs of some known facts regarding the structure
of incidence matrices of finite projective planes that are relevant for us.
\end{topic}
%
%*******************************************************************************************************
%
\section{Lorentzian lattices with symmetries of finite projective planes}
\label{s-Lorentzian}
\begin{definition}[Hermitian lattices]
Let $\cO$ be a ring as in \ref{intro1}. Let  $\op{Frac}(\cO)$ be its fraction field.
Let $L$ be a projective $\cO$-module of rank $n$ with 
a $\cO$-valued Hermitian form $\ip{\;}{\;} : L \times L \to \cO$. We shall always
assume that the Hermitian form is linear in the second variable.
The {\it dual} module of $L$, denoted $L'$, is the set of all $\cO$-valued linear functionals
on $L$. 
The Hermitian form induces a natural map $L \to L'$ given by $x \mapsto \ip{\;}{x}$.
The kernel of this map is called the {\it radical} of $L$, and is denoted by $\op{Rad}(L)$.
We say $(L, \ip{\;}{\;})$ is {\it non-singular} if $\op{Rad}(L) = 0$. 
If $(L, \ip{\;}{\;})$ is non-singular, then we say $(L, \ip{\;}{\;})$ is an {\it $\cO$-lattice}
of rank $n$. If $\op{Rad}(L) \neq 0$, we say $(L, \ip{\;}{\;})$ is a singular $\cO$-lattice. 
\par
We shall denote a lattice $(L, \ip{\;}{\;})$ simply by $L$. The dual of a lattice is a lattice.
We may identify a lattice inside its dual using the Hermitian form.
One says that $L$ is {\it unimodular} if $L' = L$. One says $L$ is {\it $p$-modular}
for some $p \in \cO$, if $L' = p^{-1} L$.
A lattice $L$ has signature $(m,k)$ if $L \otimes_{\cO} \op{Frac}(\cO)$
has a basis whose matrix of 
inner products have $m$ positive eigenvalues and $k$ negative eigenvalues.
A lattice is {\it Lorentzian} if it has signature $(1,k)$. 
\end{definition}
\begin{example}
Let $q$ be a rational prime, $p = \sqrt{-q}$ and $\cO$ be the ring of integers in $\QQ[\sqrt{-q}]$. 
Let $\cO^{1,k}$ be the free $\cO$-module of rank $(k+1)$ with the Hermitian form
$$\ip{(u_0,u_1, \dotsb, u_k)}{(v_0, v_1, \dotsb, v_k)} = \bar{u}_0  v_0 - \bar{u}_1 v_1 - \dotsb - \bar{u}_k v_k.$$
Then $\cO^{1,k}$ is unimodular while $p \cO^{1,k}$ is $q$-modular.
\end{example}
\begin{definition} Let $(\cO, p,q)$ be as in \ref{intro1}.
Let $\PP^2(\FF_q)$ be the projective plane over $\FF_q$. Let 
\begin{equation*}
n = q^2 + q + 1.
\end{equation*}
Let $\cP$ be the set of points and $\cL$ be the set of lines of $\PP^2(\FF_q)$.
The sets $\cP$ and $\cL$ have $n$ elements each.
If a point $x \in \cP$ is incident on a line $l \in \cL$, then we write $x \in l$.
Let $D$ be the (directed) incidence graph of $\PP^2(\FF_q)$.
The vertex set of $D$ is $\cP \cup \cL$. There is a directed edge in $D$ from 
a vertex $l$ to a vertex $x$, if $x \in \cP$, $l \in \cL$ and $x \in l$.
\par
Let $L^{\circ}_q$ be the free $\cO$--module of rank $2 n$ with basis vectors
indexed by $D =\cP \cup \cL$. Let $r, s \in D$.
Define a Hermitian form $\ip{\;}{\;} : L^{\circ}_{q} \times L^{\circ}_{q} \to \cO$ by
\begin{equation}
\ip{r}{s} = \begin{cases} 
-q & \text{\;if \;\;} r = s \in D, \\
p & \text{\; if \;\;} r \in \cP, s \in \cL, r \in s, \\
\bar{p} & \text{\; if \;\;} r \in \cL, s \in \cP, s \in r, \\
0 & \text{otherwise.} 
\end{cases}
\label{ipd}
\end{equation}
\end{definition}
\begin{lemma}
Given $l \in \cL$, let $w_l  = \bar{p} l + \sum_{ x \in l} x $. Then
$\ip{x'}{w_l} = 0$ and $\ip{w_l}{l'} = p$ for all $x' \in \cP$ and
$ l' \in \cL$.
\label{lemma-wl}
\end{lemma}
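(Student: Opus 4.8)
The plan is to verify both identities by a direct expansion of $w_l$ against the basis, using that the Hermitian form is linear in its second argument and, being Hermitian, conjugate-linear in its first, together with two elementary incidence facts about $\PP^2(\FF_q)$: every line carries exactly $q+1$ points, and two distinct lines meet in exactly one point. I will also use repeatedly that $\abs{p}^2 = \bar p p = q$.

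First I would compute $\ip{x'}{w_l}$ for a point $x' \in \cP$. Expanding by linearity in the second slot gives $\ip{x'}{w_l} = \bar p \ip{x'}{l} + \sum_{x \in l} \ip{x'}{x}$. In the sum the only possibly nonzero term is $\ip{x'}{x'} = -q$, occurring precisely when $x' \in l$, since two distinct points are orthogonal according to \eqref{ipd}. The first term is $\bar p \cdot p = q$ when $x' \in l$ and $0$ otherwise. Hence when $x' \in l$ the two contributions are $q$ and $-q$, which cancel, while when $x' \notin l$ both vanish; either way $\ip{x'}{w_l} = 0$.

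Next I would compute $\ip{w_l}{l'}$ for a line $l' \in \cL$. Conjugate-linearity in the first slot turns the coefficient $\bar p$ on $l$ into $p$, so $\ip{w_l}{l'} = p \ip{l}{l'} + \sum_{x \in l} \ip{x}{l'}$. Here $\ip{x}{l'}$ equals $p$ exactly when the point $x$ lies on $l'$ and is $0$ otherwise, so the sum equals $p$ times the number of points lying on both $l$ and $l'$. I then split into two cases. If $l = l'$, then $\ip{l}{l'} = -q$ and all $q+1$ points of $l$ lie on $l'$, giving $-pq + p(q+1) = p$. If $l \neq l'$, then $\ip{l}{l'} = 0$ and the two lines share exactly one point, giving $0 + p = p$. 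In both cases $\ip{w_l}{l'} = p$.

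There is no serious obstacle here; the computation is mechanical once the two incidence counts and the relation $\bar p p = q$ are in hand. The only points demanding care are the conjugation on the first argument in the second identity, so that the coefficient $\bar p$ becomes $p$ rather than remaining $\bar p$, and the bookkeeping of the cancellation $q - q = 0$ and of the telescoping $-pq + p(q+1) = p$.
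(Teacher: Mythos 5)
Your proof is correct and follows the same route as the paper's: direct expansion of $w_l$ against the basis using \eqref{ipd}, the relation $\bar p p = q$, and the two incidence counts (a line has $q+1$ points; two distinct lines meet in one point). The paper only writes out the $\ip{w_l}{l'}$ case and leaves the rest as an easy calculation, so your version is simply a more complete writeup of the identical argument.
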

\begin{proof}
The inner products are easily calculated from \eqref{ipd}. For example
\begin{equation*}
\ip{w_l}{l'} =
\begin{cases}
p.0 + p & \text{\; if \;} l \neq l', \text{\; since $l$ and $l'$ meet at one point,} \\
p.(-q)  + (q+ 1)p  & \text{\; if \;} l = l',\text{\; since every line has $(q+1)$ points on it.} \\
\end{cases} 
\end{equation*}
\end{proof}
\begin{proposition}
The radical of $L^{0}_{q}$ has rank $(n-1)$ and $\op{Rad}(L^0_q) \otimes_{\cO} \op{Frac}(\cO)$ is
spanned by the vectors $(w_{l_1} - w_{l_2})$ for $l_1, l_2$ in $\cL$.  
\label{p-rad}
\end{proposition}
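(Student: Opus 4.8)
The plan is to identify $\op{Rad}(L^0_q)$ with the kernel of the Gram matrix of the form and then to pin down that kernel exactly. Throughout I work over $F = \op{Frac}(\cO)$ and encode the incidence data in the $n\times n$ point--line incidence matrix $N$, with $N_{x,l}=1$ when $x\in l$ and $0$ otherwise. Ordering the basis of $D$ with the points $\cP$ first and the lines $\cL$ second, the formula \eqref{ipd} writes the Gram matrix in block form as
\[
G = \PM{-q I}{p N}{\bar p N^T}{-q I},
\]
which is Hermitian since $N$ is a real matrix. Because the form is linear in the second variable and conjugate-linear in the first, a vector $x$ lies in $\op{Rad}(L^0_q)$ if and only if $Gx = 0$; so the whole problem becomes the computation of $\ker G$ over $F$.

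First I would verify that each difference $w_{l_1}-w_{l_2}$ lies in the radical. By linearity in the second slot it is enough to evaluate $\ip{r}{w_{l_1}-w_{l_2}}$ against the basis vectors $r\in D$. For $r = x'\in\cP$ this vanishes directly by Lemma \ref{lemma-wl}; for $r = l'\in\cL$ one uses $\ip{l'}{w_l}=\ol{\ip{w_l}{l'}}=\bar p$, again from Lemma \ref{lemma-wl}, so the two terms cancel. Hence every $w_{l_1}-w_{l_2}$ is in $\op{Rad}(L^0_q)$. To see that these differences already span an $(n-1)$-dimensional space, I would read off from $w_l = \bar p\, l + \sum_{x\in l} x$ that the component of $w_l$ along the basis vector $l$ is $\bar p$ while its components along the other line-vectors vanish; this triangular structure shows the $w_l$ are linearly independent, so the differences $w_{l_1}-w_{l_2}$ span the kernel of the sum-of-coefficients functional on $\s\{w_l : l\in\cL\}$, a subspace of dimension exactly $n-1$.

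It then remains to show the radical is no larger, i.e. $\dim_F\ker G = n-1$. Solving $Gx = 0$ for $x=(u,v)$, the top block gives $u = (p/q)Nv$, and substituting into the bottom block and using $\bar p p = q$ collapses the system to the single equation $N^T N\, v = q\, v$, with $u$ then determined by $v$. Thus $\ker G$ is in bijection with the $q$-eigenspace of $N^T N$, and its dimension equals the dimension of that eigenspace.

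The crux is therefore the structure of $N^T N$. Using the standard projective-plane facts (each line carries $q+1$ points and any two distinct lines meet in a unique point) one has $N^T N = qI + J$, where $J$ is the all-ones matrix; these are exactly the incidence-matrix identities collected in the later section the paper refers to. Consequently $N^T N\, v = q v$ is equivalent to $Jv = 0$, that is, the coordinates of $v$ sum to zero, which carves out a space of dimension $n-1$ (the rank-one nature of $J$, equivalently the spectral gap that keeps $N$ invertible over $F$, is precisely what yields $n-1$ rather than $n$). Hence $\dim_F\op{Rad}(L^0_q) = n-1$, and comparing with the previous paragraph the differences $w_{l_1}-w_{l_2}$ fill out all of $\op{Rad}(L^0_q)\otimes_\cO F$. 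The only genuine obstacle is establishing and exploiting the identity $N^T N = qI + J$; everything else is a direct block computation once the radical is recognized as $\ker G$.
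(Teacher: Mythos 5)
Your argument is correct, but it reaches the upper bound on the radical in a genuinely different way from the paper. Both proofs begin identically: Lemma \ref{lemma-wl} shows each $w_{l_1}-w_{l_2}$ is in the radical, and the visible $\bar p$-coefficient of $w_l$ on the basis vector $l$ gives linear independence of the $w_l$'s, hence an $(n-1)$-dimensional span of the differences. For the reverse inequality you identify $\op{Rad}(L^0_q)$ with $\ker G$ for the block Gram matrix $G = \bigl(\begin{smallmatrix} -qI & pN \\ \bar p N^T & -qI\end{smallmatrix}\bigr)$, eliminate the point-block to reduce to $N^TN v = qv$, and invoke the incidence identity $N^TN = qI + J$ (each line has $q+1$ points, two lines meet in one point) to cut the kernel down to the hyperplane $Jv=0$ of dimension $n-1$. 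The paper instead passes to the quotient $L^\circ_q/U$ by the known part $U$ of the radical and exhibits $n+1$ mutually orthogonal vectors there, namely $\cP\cup\{w_\cP\}$, whose Gram matrix is $\op{diag}(-q,\dotsc,-q,q)$; nondegeneracy of that diagonal form forces $U\otimes\op{Frac}(\cO)$ to be the whole radical. The same projective-plane counting facts underlie both computations, but the paper's choice of vectors delivers, for free, the signature $(1,n)$ of the induced form on $L_q$ --- a fact it relies on in \ref{def-Lq} --- whereas your block elimination gives only the dimension count and would need a further step (e.g.\ diagonalizing the quotient form, or a spectral argument on $N^TN$) to recover the signature. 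Your route has the mild virtue of making the role of the incidence matrix explicit, foreshadowing the use of its $\FF_q$-rank in Theorem \ref{th-pmodular}; just note that over the quaternionic $\cO=\cH$ one must fix left/right module conventions before writing ``$Gx=0$'' and ``eigenspace,'' though $q$ being central makes this harmless.
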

\begin{proof}
If $l_1$, $l_2$ belong to $\cL$, then
lemma \ref{lemma-wl} implies that $(w_{l_1} - w_{l_2}) \in \op{Rad}(L^{\circ}_{q})$.
Let $U$ be the $\cO$-module of rank $(n-1)$ spanned by $(w_{l_1} - w_{l_2})$ for all $l_1, l_2 \in \cL$.
Then $L^{\circ}_q/U$ is a $\cO$--module of rank $(n+1)$.
Since $U \subseteq \op{Rad}(L^{\circ}_{q})$, the Hermitian form on $L^{\circ}_q$
descends to a Hermitian form on $L^{\circ}_q/U$, which is again denoted by $\ip{\;}{\;}$.
For each $l \in \cL$, the vectors $w_l$ have the same image in $L^{\circ}_q/U$;
call this image $w_{\cP}$.
Lemma \ref{lemma-wl} implies that $w_{\cP}$ is orthogonal (in $L^{\circ}_q/U$)
to each $x \in \cP$ and $\ip{w_{\cP}}{ l } = p$ for all $l \in \cL$. So
\begin{equation*}
 \ip{w_{\cP}}{w_{\cP}} = \ip{\bar{p} l + \sum_{x \in l} x }{w_{\cP}} = p \ip{l}{ w_{\cP}} = q.
\end{equation*}
The matrix of inner products of the $(n + 1)$ vectors  
$\cP \cup \lbrace w_{\cP} \rbrace$ in $L^{\circ}_q/U$ is a diagonal matrix with diagonal entries 
$( -q, -q, \dotsb, -q, q)$. So $L^{\circ}_q/U$ is a non-degenerate $ \cO$--module of signature
$(1,n)$. It follows that $U\otimes_{\cO} \op{Frac}(\cO) = \op{Rad}(L^{\circ}_{q}) \otimes_{\cO} \op{Frac}(\cO) $.
\end{proof}
\begin{definition}
\label{def-Lq}
Let $L_q$ be the quotient of $L_q^{\circ}$ by its radical. 
If $a \in \cO$ and $v \in L_q^0$ such that 
$a v \in \op{Rad}(L_q^0)$, then $v \in \op{Rad}(L_q^0)$. So 
$L_q$ is torsion free and it is obviously finitely generated.
So if $\cO$ is a Dedekind domain, then $L_q$ is is projective. 
Over Hurwitz's integral quaternions $\cH$, a finitely generated torsion-free module
is free, since $\cH$ has division with remainders.
\par
The proof of \ref{p-rad} shows that the Hermitian form on $L^{\circ}_q$
induces a non-degenerate Hermitian form on $L_q$ of signature $(1,n)$.
The basis vectors of $L^{\circ}_{q}$ defines $2n$ vectors in $L_q$ indexed by the points and lines of $\PP^2(\FF_q)$.
These will be denoted by $x_0, \dotsb, x_{n-1}$ and $l_0, \dotsb, l_{n-1}$ respectively.
\par
We have two more distinguished vectors $w_{\cP}$ and $w_{\cL}$ in $L_q$. We already defined $w_{\cP}$ above.
For $x \in \cP$,  let $w_{x} = p x + \sum_{x \in l} l$.  As above, one checks that $(w_x - w_{x'}) \in \op{Rad}(L^{\circ}_{q})$
for $x, x' \in \cP$. We let $w_{\cL}$ be the image of the vectors $w_x$ in $L_q$. So
\begin{equation}
w_{\cP}  = \bar{p} l + \sum_{ x' \in l} x'  \;\;\text{and} \;\;\;\;
w_{\cL}  = p x + \sum_{ x \in l'} l', 
\label{defwpwl}
\end{equation}
for any $x \in \cP$ and $l \in \cL$.
Using \eqref{ipd} one checks that for all $x \in \cP$ and $l \in \cL$,
we have,  
\begin{equation}
 \ip{w_{\cP}}{x} = \ip{w_{\cL}}{l} = 0, \text{\;\;} 
 \ip{w_{\cP}}{l} = \ip{x}{w_{\cL}} = p, \text{\;\;}
 \abs{w_{\cP}}^2 = \abs{w_{\cL}}^2 = q, \text{\;\;} 
 \ip{w_{\cP}}{ w_{\cL}} = (q+1)p.
\label{eq-ipwpwl}
\end{equation}
\end{definition}
\begin{topic}{\bf Line Coordinates on $L_q$: }
\label{t-linec}
Let $L_{\cL}$ be the sublattice of $L_q$ spanned by $\lbrace w_{\cL}, l_0, \dotsb, l_{n-1} \rbrace$
and $L_{\cP}$ be the sublattice of $L_q$ spanned by $\lbrace w_{\cP}, x_0, \dotsb, x_{n-1} \rbrace$.
Then $L_{q} = L_{\cP} + L_{\cL}$.
From \eqref{ipd} and \eqref{eq-ipwpwl}, we observe that 
 $$L_{\cP} \simeq L_{\cL} \simeq p \cO^{1,n}.$$
By looking at the inner products, we get the following inclusions: 
\begin{equation*} 
\xymatrix{ 
& L_{\cP} \ar@{^{(}->}[dr] & &  p^{-1} L_{\cP}  \ar@{^{(}->}[dr] &\\
L_{\cP} \cap L_{\cL}  \;\;\;\;\;\;  \ar@{^{(}->}[dr] \ar@{^{(}->}[ur] & &   L_{q}  \ar@{^{(}->}[dr] \ar@{^{(}->}[ur] & & \;\;\;\;\;\; p^{-1}L_{q} \subseteq L_{q}'\\
& L_{\cL}   \ar@{^{(}->}[ur] & & p^{-1}L_{\cL}  \ar@{^{(}->}[ur] &
}
 \end{equation*}
Let us identify $p^{-1} L_{\cL}$ with $\cO^{1,n}$. Then $L_{q}$ becomes a sub-lattice of the unimodular $\cO$-lattice
$\cO^{1,n}$. Following \cite{DA:Y555}, we call this the {\it line coordinates} on $L_{q}$. 
 In other words, the line coordinates for $v = p^{-1}(v_{\infty} w_{\cL} + v_0 l_0 + \dotsb + v_{n-1} l_{n-1})$ 
is $(v_{\infty} ; v_0, v_1, \dotsb, v_{n-1})$.
The following lemma will help us decide when $L_{q}$ is $p$-modular.
 \end{topic}
\begin{lemma} 
Let $(\cO,p)$ be as in \ref{intro1}.
Let $M$ be an unimodular Hermitian $\cO$-lattice. Then $W = M / p M$ is a $\cO/p \cO$-vector space.
Let $\pi: M \to W$ be the projection. Let $L$ be a sublattice of $M$. Let $X = \pi(L)$.
\par
(a) The Hermitian form on $M$ induces a non-degenerate symmetric bilinear form on the
$\cO/p \cO$-module $W$ given by $(\pi(x), \pi(y)) = \ip{x}{y} \bmod p \cO$. 
\par
(b) $X$ is isotropic if and only if $L \subseteq p L'$.
\par
(c) There is a bijection between $p$-modular lattices $L$ lying between $M$ and $pM$ and 
subspaces $X \subseteq W$ such that $X = X^{\bot}$, 
given by $X = \pi(L)$. Such subspaces $X$ are maximal isotropic.
\label{l-isot}
\end{lemma}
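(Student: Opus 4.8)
The plan is to establish the three parts in order, using the correspondence $L \mapsto X = \pi(L)$ together with the unimodularity of $M$ to move freely between duals and quotients. Throughout I would write $k = \cO/p\cO$, which is a field since $p$ is prime, so that $W = M/pM$ is genuinely a $k$-vector space.

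For part (a), I would first check that the pairing $(\pi(x),\pi(y)) = \ip{x}{y} \bmod p\cO$ is well-defined: replacing $x$ by $x + pu$ changes $\ip{x}{y}$ by $p\ip{u}{y} \in p\cO$, and similarly in the second slot, so the class mod $p\cO$ is unchanged. Symmetry of the induced form comes from the Hermitian symmetry $\ip{y}{x} = \ol{\ip{x}{y}}$ together with the fact that conjugation acts trivially on $\cO/p\cO$ (this is the defining property $\bar z = z \bmod p\cO$ from \ref{intro1}, which makes the induced form symmetric rather than merely Hermitian). For non-degeneracy, suppose $\pi(x)$ pairs trivially with all of $W$; then $\ip{x}{y} \in p\cO$ for every $y \in M$, so the functional $\ip{\;}{x}$ lies in $pM'$. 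Since $M$ is unimodular, $M' = M$, hence $\ip{\;}{x} = p\ip{\;}{u}$ for some $u \in M$, giving $x \in pM$ and $\pi(x) = 0$. This is the step where unimodularity does the real work.

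For part (b), I would unwind the definitions directly. The condition $L \subseteq pL'$ means every $x \in L$ defines a functional $\ip{\;}{x}$ lying in $pL'$, i.e. $\ip{y}{x} \in p\cO$ for all $y \in L$; reducing mod $p\cO$, this says exactly $(\pi(y),\pi(x)) = 0$ for all $x,y \in L$, which is the statement that $X = \pi(L)$ is isotropic. The converse is immediate by reversing the reduction, so $X$ isotropic $\iff L \subseteq pL'$.

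Part (c) is the main point, and the bijection is where I expect the real content to lie. Given $L$ with $pM \subseteq L \subseteq M$, note $\dim_k W = \op{rank}_{\cO} M = n+1$ and the form on $W$ is non-degenerate by (a). For the forward map I would show that $p$-modularity $L = pL'$ forces $X = X^{\bot}$ in $W$: the inclusion $X \subseteq X^{\bot}$ is part (b) applied to $L \subseteq pL'$, and for the reverse inclusion I would use unimodularity of $M$ to identify $p^{-1}L' \cap M$ with the preimage $\pi^{-1}(X^{\bot})$, so that $L = pL'$ translates into $\pi^{-1}(X) = \pi^{-1}(X^{\bot})$, hence $X = X^{\bot}$. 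The key identity here is that for a lattice $L$ between $pM$ and $M$, one has $\pi(pL') = X^{\bot}$; this follows because $y \in pL'$ iff $\ip{z}{y} \in p\cO$ for all $z \in L$, which after reduction is precisely the orthogonality condition cutting out $X^{\bot}$. Conversely, given $X = X^{\bot}$, I would define $L = \pi^{-1}(X)$ and run the same identities backwards to conclude $L = pL'$; the two constructions are visibly inverse since $\pi^{-1}$ and $\pi$ are inverse bijections on the poset of subspaces of $W$ containing, respectively lying in, the appropriate images. Finally, that such self-dual $X$ are maximal isotropic is a standard fact: any isotropic subspace has dimension at most half of $\dim_k W$ in a non-degenerate symmetric space, and $X = X^{\bot}$ together with $\dim X + \dim X^{\bot} = \dim W$ forces $\dim X = \tfrac{1}{2}\dim W = \tfrac{n+1}{2}$, the maximal possible. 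The obstacle to watch is making the dimension-counting formula $\dim X + \dim X^{\bot} = \dim_k W$ rigorous, which relies squarely on the non-degeneracy from part (a).
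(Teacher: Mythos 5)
Your proposal is correct and follows essentially the same route as the paper: symmetry of the induced form from $\bar z\equiv z\bmod p\cO$, non-degeneracy from unimodularity of $M$, and for (c) the key identity $\pi^{-1}(X^{\bot})=pL'$ (using $pM\subseteq L$ and $M'=M$ to get $pL'\subseteq M$), which is exactly the verification the paper reduces part (c) to. The only blemish is the stray ``$p^{-1}L'\cap M$'', which should read $pL'$, as your own later statement of the key identity makes clear.
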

\begin{proof}
(a) Since $M$ is unimodular, the form on $W$ is non-degenerate.
 Since the form $\ip{\;}{\;}$ on $M$  is Hermitian and $\bar{z} \equiv z \bmod p\cO$ for all $z \in \cO$,
 the form on $W$ is symmetric. This proves (a). Part (b) is clear.
\par
(c) Since $pM \subseteq L \subseteq M$ and $M$ is unimodular, $L' \subseteq p^{-1}M$; so $p L' \subseteq M$.
Part (c) follows, once one verifies that $\pi^{-1} (X^{\bot}) = p L'$.
\end{proof}
\begin{theorem} Let $(\cO, p, q)$ be as in \ref{intro1}.
 Suppose $q$ is a rational prime and $\cO/p \cO \simeq \FF_q$.
Then the lattice $L_q$ is $p$-modular, that is, $p L_q'  = L_q$.
 \label{th-pmodular}
 \end{theorem}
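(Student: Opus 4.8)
The plan is to present $L_q$ as a lattice squeezed between a unimodular lattice and its $p$-multiple, and then read off $p$-modularity from Lemma \ref{l-isot}(c). In the line coordinates of \ref{t-linec}, set $M = p^{-1}L_{\cL}$, so that $M \cong \cO^{1,n}$ is unimodular. The inclusions displayed in \ref{t-linec} give $pM = L_{\cL} \subseteq L_q \subseteq M$, so $L_q$ is exactly the sort of lattice to which the lemma applies. Write $W = M/pM$ and let $\pi : M \to W$ be the projection. Since $\cO/p\cO \cong \FF_q$ by hypothesis, $W$ is an $\FF_q$-vector space of dimension $n+1$ equipped with the non-degenerate symmetric form of Lemma \ref{l-isot}(a), and by part (c) it suffices to show that $X := \pi(L_q)$ satisfies $X = X^{\bot}$. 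As the form is non-degenerate, this is equivalent to the pair of conditions $X \subseteq X^{\bot}$ and $\dim_{\FF_q} X = (n+1)/2$ (note $n+1 = q^2+q+2$ is even).

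The inclusion $X \subseteq X^{\bot}$ is immediate. Every Gram entry of $L_q$ lies in $p\cO$: the values in \eqref{ipd} are $-q = -p\bar p$, $p$, $\bar p \equiv p \bmod p\cO$, and $0$, all divisible by $p$. Hence $\ip{u}{v} \in p\cO$ for all $u, v \in L_q$, i.e. $L_q \subseteq pL_q'$ --- the inclusion $p^{-1}L_q \subseteq L_q'$ already drawn in \ref{t-linec} --- and Lemma \ref{l-isot}(b) then gives $X \subseteq X^{\bot}$.

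For the dimension I would first make $X$ explicit. Since $L_q = L_{\cP} + L_{\cL}$ and $\pi(L_{\cL}) = \pi(pM) = 0$, we have $X = \pi(L_{\cP})$. From $w_{\cL} = px + \sum_{x \in l} l$ one reads off that the line coordinates of a point vector $x$ are $(1; -\chi(x))$, where $\chi(x) \in \FF_q^{n}$ is the incidence row recording the lines through $x$; moreover $\pi(w_{\cP}) = \sum_{x \in l}\pi(x)$ lies in the span of the point images. Thus $X = \s_{\FF_q}\{\pi(x) : x \in \cP\}$ is the row space over $\FF_q$ of the $n \times (n+1)$ matrix $[\mathbf 1 \mid -N]$, where $N$ is the point--line incidence matrix of $\PP^2(\FF_q)$. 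Since each point lies on $q+1 \equiv 1 \bmod q$ lines, the column $\mathbf 1$ is the sum of the columns of $N$, so $\dim_{\FF_q} X = \op{rank}_{\FF_q}[\mathbf 1 \mid N] = \op{rank}_{\FF_q} N$.

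What remains is to compute the $\FF_q$-rank of the incidence matrix of $\PP^2(\FF_q)$, and this is the real heart of the matter, as well as the one place where the primality of $q$ is essential. The classical rank formula, recorded in Section \ref{s-inc}, yields $\op{rank}_{\FF_q} N = \binom{q+1}{2} + 1 = \tfrac{q(q+1)}{2} + 1 = (n+1)/2$. Combining this with $X \subseteq X^{\bot}$ forces $X = X^{\bot}$, and Lemma \ref{l-isot}(c) then delivers $L_q = pL_q'$. I expect all the difficulty to sit in this incidence-matrix rank computation; the rest is bookkeeping with the line coordinates and the Gram matrix \eqref{ipd}.
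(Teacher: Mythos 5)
Your proposal is correct and follows essentially the same route as the paper: embed $L_q$ in the unimodular lattice $M = p^{-1}L_{\cL}$ via line coordinates, reduce mod $p$, apply Lemma \ref{l-isot}, and pin down $\dim_{\FF_q}\pi(L_q) = (n+1)/2$ using the rank of the incidence matrix from Theorem \ref{th-GM} (the paper phrases the column reduction via row sums of $\tilde A$ being $-q \equiv 0$, which is the same observation as your ``$\mathbf 1$ is the sum of the columns of $N$''). No gaps.
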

 The proof uses the following theorem that appeared in the coding theory literature:
 \begin{theorem}[\cite{GM:DCC}]
 Let $q = l^d$ be a power of a rational prime $l$. Then
 the $\FF_q$-rank of the incidence matrix of $\PP^2(\FF_q)$ is equal to $\binom{l+1}{2}^d + 1$.
 \label{th-GM}
\end{theorem}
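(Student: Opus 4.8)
The plan is to compute the rank over $\FF_q$ directly, exploiting that the characteristic is $l$ and $q = l^d$. Coordinatize points as $[x]=[x_0:x_1:x_2]$ and lines as $[a]=[a_0:a_1:a_2]$, with $[x]$ incident on $[a]$ iff $\sum_i a_i x_i = 0$. Since $t^{q-1}$ equals $1$ for $t\neq 0$ and $0$ for $t=0$ in $\FF_q$, the incidence matrix $A$ satisfies $A_{[a],[x]} = 1 - (\sum_i a_i x_i)^{q-1}$; both sides are invariant under scaling $x\mapsto\lambda x$, $a\mapsto\mu a$, so this is well defined on $\PP^2(\FF_q)$. Hence $A = J - B$ over $\FF_q$, where $J$ is the all-ones matrix and $B_{[a],[x]} = (\sum_i a_i x_i)^{q-1}$. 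The task reduces to computing $\op{rank} B$ and the one-dimensional correction coming from $J$.

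First I would expand $B$ using $q-1 = (l-1)(1 + l + \dots + l^{d-1})$ together with the Frobenius identity $(\sum_i a_i x_i)^{l^j} = \sum_i a_i^{l^j} x_i^{l^j}$, valid in characteristic $l$, so that $(\sum_i a_i x_i)^{q-1} = \prod_{j=0}^{d-1}\bigl(\sum_i a_i^{l^j} x_i^{l^j}\bigr)^{l-1}$. Multiplying out the $d$ factors gives
\[
B_{[a],[x]} = \sum_{E \in S} \gamma_E \, a^E x^E ,
\]
where $E=(E_0,E_1,E_2)$ ranges over the set $S$ of exponent triples with $0\leq E_i\leq q-1$ and $E_0+E_1+E_2 = q-1$ admitting no carries in base $l$ (equivalently $\binom{q-1}{E}\not\equiv 0 \bmod l$, by Kummer/Lucas), and each $\gamma_E$ is a product of multinomial coefficients $\binom{l-1}{\,\cdot\,}$, hence nonzero. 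Counting digit by digit, a no-carry triple summing to $l^d-1$ amounts to choosing, independently in each of the $d$ base-$l$ positions, a composition of $l-1$ into three nonnegative parts; there are $\binom{l+1}{2}$ such compositions, so $\lvert S\rvert = \binom{l+1}{2}^d$.

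Next I would factor $B = P\,\Gamma\,Q$, with $P_{[a],E} = a^E$, $Q_{E,[x]} = x^E$, and $\Gamma = \op{diag}(\gamma_E)$ invertible. The decisive step is the linear independence of the monomial functions $\{[x]\mapsto x^E : E\in S\}$ on $\PP^2(\FF_q)$: these are distinct reduced monomials (all exponents $\leq q-1$), and distinct reduced monomials are linearly independent as functions on $\FF_q^3$; since each $x^E$ with $\lvert E\rvert = q-1 \geq 1$ vanishes at the origin, independence persists on $\FF_q^3\setminus\{0\}$ and hence on $\PP^2(\FF_q)$. Thus $Q$ has full row rank and $P\Gamma$ full column rank $\lvert S\rvert$, whence $\op{rank} B = \lvert S\rvert = \binom{l+1}{2}^d$.

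Finally I would account for the $+1$ from $J$. Summing the rows of $A$ shows $\mathbf 1\,A = (q+1)\mathbf 1 = \mathbf 1$ over $\FF_q$, since every point lies on $q+1 \equiv 1$ lines; so the all-ones vector lies in the row space of $A$, and consequently every row of $B = J - A$ does too. Thus the row space of $A$ equals the span of $\{x^E : E\in S\}$ together with $\mathbf 1$. It remains to check $\mathbf 1 \notin \s\{x^E : E\in S\}$: a relation $1 = \sum_{E\in S} c_E x^E$ on $\PP^2(\FF_q)$ would force the reduced polynomial $1 - \sum_E c_E x^E$ to equal the delta-at-origin function, whose unique reduced form $\prod_i(1 - x_i^{q-1})$ contains a monomial of degree $2(q-1)$ that cannot appear on the left. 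Hence $\op{rank} A = \binom{l+1}{2}^d + 1$. \textbf{The main obstacle} is the lower bound: matching the count $\lvert S\rvert$ requires that the $d$ Frobenius twists contribute independently, which is precisely the linear independence of the reduced monomials $x^E$, and then the separate verification that the constant function genuinely adds one further dimension rather than being absorbed.
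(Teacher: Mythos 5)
Your proposal is correct, but it takes a genuinely different route from the paper, which in fact contains no proof of Theorem \ref{th-GM} at all: the paper quotes the result from \cite{GM:DCC}, and what Section \ref{s-inc} supplies is only the key structural ingredient of that cited proof, namely that via the Singer cycle $\PP^2(\FF_q)\simeq \FF^*_{q^3}/\FF^*_q$ the lines are the cyclic translates of a perfect difference set $\Delta$, so the row space of the incidence matrix is the cyclic code $R\theta(t)$ with $\theta(t)=\sum_{d\in\Delta}t^d$ in $R=\FF_q[t]/\langle t^n-1\rangle$; Graham and MacWilliams then compute $\dim_{\FF_q} R\theta(t)$ by a number-theoretic count of the $n$-th roots of unity at which $\theta$ vanishes. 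You instead give a self-contained ``polynomial method'' proof: writing the incidence function as $1-(\sum_i a_ix_i)^{q-1}$, using Frobenius and Lucas to expand $B$ over the carry-free exponent set $S$ with $\lvert S\rvert=\binom{l+1}{2}^d$, and invoking linear independence of reduced monomials to get $\op{rank} B=\lvert S\rvert$. I checked the delicate points and they all hold: the monomials $a^E$, $x^E$ are well defined on projective classes because $\lvert E\rvert=q-1$; distinct digit compositions give distinct $E$, so there are no collisions in the expansion and each $\gamma_E=\prod_j\binom{l-1}{e^{(j)}}$ is a product of multinomials of numbers below $l$, hence nonzero mod $l$; independence on $\FF_q^3$ descends to $\PP^2(\FF_q)$ since each $x^E$ vanishes at the origin and is scale-invariant; the column sums $q+1\equiv 1 \bmod l$ correctly place the all-ones vector in the row space of $A$; and the comparison with the unique reduced form $\prod_i(1-x_i^{q-1})$ of the delta function at the origin rules out $\mathbf{1}\in\s\{x^E: E\in S\}$, because your left-hand polynomial only has monomials of degree $0$ and $q-1$ while the delta function has nonzero coefficients in degree $2(q-1)$. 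As for what each approach buys: the cyclic-code route of \cite{GM:DCC} exposes structure (the difference-set code and its zeros) valuable in coding theory but requires the Singer cycle and a nontrivial root count, whereas your argument is elementary, needs no transitive automorphism, and generalizes directly to $p$-ranks of point--hyperplane incidences in $\PP^r(\FF_q)$.
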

\begin{remark}
Theorem \ref{th-GM} appears in Theorem $2'$, page 1067, of \cite{GM:DCC}.
The proof of \ref{th-GM} uses the fact that {\it the incidence matrix of $\PP^2(\FF_q)$
generates  a ``cyclic difference set code"} (see \cite{SJ:FP}).  We have included a quick explanation of this fact in section \ref{s-inc}.
\end{remark}
  \begin{proof}[proof of theorem \ref{th-pmodular}]
Identify $L_q$ inside the unimodular lattice $p^{-1} L_{\cL} = \cO^{1,n} = M$ using the line coordinates. Then
equation \eqref{defwpwl} implies that $x_i = (1 ; \epsilon_1, \dotsb, \epsilon_n)$, where
the $\epsilon_j$'s are either $-1$ or $0$ and the $-1$'s occur at the coordinates
corresponding to the lines that pass through $x_i$.
Consider the subspace $X = L/pM$ of the $\FF_q$-vector space $M/pM$.
Since $X$ is isotropic with respect to the induced non-degenerate form on $M/pM$,
we have $\op{dim}_{\FF_q}(X) \leq \tfrac{1}{2}\op{dim}_{\FF_q}(M/pM) =  \tfrac{1}{2} (n+1)$.
\par
Note that $X$ is spanned by the images of the vectors $x_0, \dotsb, x_{n-1}$. Let
$\tilde{A}$ be the $n \times (n+1)$ matrix whose $i$-th row is $x_i$.
Let $A$ be the matrix obtained from $\tilde{A}$ by deleting
the first column of all $1$'s. Each row of $\tilde{A}$ add up to $-q$, so 
$A$ and $\tilde{A}$ have the same $\FF_q$-rank.
Since $(-A)$ is the incidence matrix of $\PP^2(\FF_q)$, 
theorem \ref{th-GM} implies that
$\op{rank}_{\FF_q}(A) = \frac{n+1}{2}$.
So $\op{dim}_{\FF_q}(X) = \frac{n+1}{2}$.
Thus $X$ is maximal isotropic; so lemma
\ref{l-isot} implies that $L_q$ is $p$-modular.
\end{proof}
\begin{corollary} Suppose the assumptions of theorem \ref{th-pmodular} hold. Identify  $p^{-1} L_{\cL}$ with $\cO^{1,n}$. Then
\begin{equation*}
L_q = \lbrace v \in \cO^{1,n} \colon \ip{x}{v} \equiv 0 \mod p \cO \text{\; for all \;} x \in \cP \rbrace.
\end{equation*}
\end{corollary}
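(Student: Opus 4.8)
The plan is to recognize the right-hand side as an object already analyzed in the proof of Theorem~\ref{th-pmodular}. Write $M = \cO^{1,n}$, $W = M/pM$ and $\pi\colon M \to W$ for the projection, and let $X = \pi(L_q)$, exactly as in Lemma~\ref{l-isot} and the proof of Theorem~\ref{th-pmodular}. Denote the right-hand side by $R = \{ v \in M \colon \ip{x}{v} \equiv 0 \bmod p\cO \text{ for all } x \in \cP \}$. The key claim I would isolate is that $R = \pi^{-1}(X^{\bot})$; granting this, the corollary is immediate, since the proof of Lemma~\ref{l-isot}(c) shows $\pi^{-1}(X^{\bot}) = p L_q'$, and Theorem~\ref{th-pmodular} gives $p L_q' = L_q$.

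To prove $R = \pi^{-1}(X^{\bot})$ I would argue as follows. First, by the description of the induced form in Lemma~\ref{l-isot}(a), for $v \in M$ and $x \in \cP \subseteq L_q \subseteq M$ the congruence $\ip{x}{v} \equiv 0 \bmod p\cO$ is literally the statement $(\pi(x), \pi(v)) = 0$ in $W$. Second, I would use the fact, established inside the proof of Theorem~\ref{th-pmodular}, that the images $\pi(x)$ for $x \in \cP$ span $X$. Combining the two, $v$ satisfies the defining congruence of $R$ for every $x \in \cP$ precisely when $\pi(v)$ is orthogonal to a spanning set of $X$, i.e. when $\pi(v) \in X^{\bot}$. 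This is the claim.

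As a sanity check that avoids quoting the internal computation of Lemma~\ref{l-isot}(c), one can verify the two inclusions by hand. For $L_q \subseteq R$: if $v \in L_q = p L_q'$, write $v = p w$ with $w \in L_q'$; then $\ip{x}{v} = p\,\ip{x}{w} \in p\cO$ because $\ip{x}{w} \in \cO$ for $x \in L_q$ and $w \in L_q'$, using linearity in the second variable. For $R \subseteq L_q$: the congruence condition forces $\pi(v) \in X^{\bot}$, and $X^{\bot} = X$ because $L_q$ is $p$-modular (Lemma~\ref{l-isot}(c)); hence $\pi(v) = \pi(u)$ for some $u \in L_q$, and since $\ker\pi = pM = L_{\cL} \subseteq L_q$ we conclude $v - u \in L_q$, so $v \in L_q$.

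I do not expect a genuine obstacle here: all the substantive work---the rank computation via Theorem~\ref{th-GM}, the maximal isotropy of $X$, and the identification $\pi^{-1}(X^{\bot}) = pL_q'$---is already carried out. The only points requiring care are bookkeeping ones: that testing the congruence against the point vectors $x \in \cP$ alone already cuts out all of $X^{\bot}$ (which is exactly why the spanning statement from the proof of Theorem~\ref{th-pmodular} is essential), and that $pM = L_{\cL}$ sits inside $L_q$, so that the passage between $M$ and $W = M/pM$ loses nothing.
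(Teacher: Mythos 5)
Your proof is correct, and it is essentially the argument the paper intends: the corollary is stated without proof as an immediate consequence of Theorem \ref{th-pmodular} together with Lemma \ref{l-isot}(c), which is exactly the route you take (identifying the right-hand side with $\pi^{-1}(X^{\bot}) = pL_q'$ and using that the $\pi(x)$, $x \in \cP$, span $X$). The direct verification of the two inclusions you add is a harmless, correct double-check.
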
  
\begin{remark}
Suppose $\cO$ is the ring of integers in a quadratic imaginary number field. Then
by structure theory of modules over Dedekind domains, we know that
as a $\cO$-module $L_q \simeq \cO^n \oplus I$ where $I$ is some ideal in $\cO$.
It it not clear to me whether $L_q$ is a free $\cO$-module when $\cO$ is
not a principal ideal domain.
\end{remark}

%
%*******************************************************************************************************
%
\section{Positive definite lattices with symmetry of finite projective planes}
\label{s-definite}
\begin{lemma}
Let $(\cO, p)$ be as in \ref{intro1}.
Let $L$ be a Hermitian $\cO$-lattice such that $p L' = L$.
If $z$ is a primitive element of $L$,
then $\ip{L}{z} =  p\cO$. Since $\bar{p} \cO = p \cO$, we also have $\ip{z}{L} = p \cO$.
\label{l-dedekind}
\end{lemma}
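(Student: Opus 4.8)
The plan is to show that the $\cO$-submodule $\ip{L}{z}\subseteq\cO$ equals $p\cO$ by proving the two inclusions separately, using $p$-modularity for one and primitivity for the other. Throughout I identify $L'$ with a sublattice of $V=L\otimes_{\cO}\op{Frac}(\cO)$ via the form, so that $L'=\{v\in V\colon \ip{w}{v}\in\cO\text{ for all }w\in L\}$ and the hypothesis reads $L=pL'$.

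First I would prove $\ip{L}{z}\subseteq p\cO$. Since $pL'=L$, I can write $z=pz'$ with $z'\in L'$. For any $w\in L$, linearity of the form in the second variable gives $\ip{w}{z}=p\ip{w}{z'}$, and $\ip{w}{z'}\in\cO$ because $z'$ lies in the dual lattice. Hence every value $\ip{w}{z}$ is divisible by $p$, so $\ip{L}{z}\subseteq p\cO$.

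The reverse inclusion is where primitivity enters, and it is the main point: it suffices to exhibit a single $w_0\in L$ with $\ip{w_0}{z}$ a generator of $p\cO$. Because $z$ is primitive, $\cO z$ is a direct summand of $L$: over a Dedekind domain $L/\cO z$ is finitely generated and torsion-free, hence projective, so $\cO z\hookrightarrow L$ splits, while over Hurwitz's ring $\cH$ the module $L$ is free and $z$ extends to a basis. The projection onto the summand $\cO z\cong\cO$ is then an $\cO$-linear functional $f\in L'$ with $f(z)=1$. Writing $f=\ip{\cdot}{u}$ for the corresponding $u\in L'$, this says $\ip{z}{u}=1$, whence $\ip{u}{z}=\overline{\ip{z}{u}}=1$ is a unit. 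Now $u\in L'=p^{-1}L$ gives $pu\in L$, and conjugate-linearity in the first variable yields $\ip{pu}{z}=\bar p\,\ip{u}{z}=\bar p$. Thus $\bar p\in\ip{L}{z}$, and since $\bar p\cO=p\cO$ the ideal $\ip{L}{z}$ contains $p\cO$.

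Combining the two inclusions gives $\ip{L}{z}=p\cO$, and the final assertion $\ip{z}{L}=p\cO$ follows from conjugate symmetry, $\ip{z}{w}=\overline{\ip{w}{z}}$, together with $\overline{p\cO}=\bar p\cO=p\cO$. The step I expect to demand the most care is the reverse inclusion: it rests on the fact that a primitive vector spans a direct summand, so that the evaluation $f\mapsto f(z)$ on $L'=\op{Hom}_{\cO}(L,\cO)$ is surjective, which must be justified uniformly in the Dedekind and quaternionic cases, and on keeping the conjugate-linear conventions consistent when passing between $f(z)$ and $\ip{u}{z}$.
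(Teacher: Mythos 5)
Your proof is correct, but it follows a genuinely different route from the paper's. The paper disposes of the quaternionic case by noting every ideal of $\cH$ is principal, and for Dedekind $\cO$ it argues directly on the ideal $I = \ip{L}{p^{-1}z} \subseteq \cO$: setting $I \cap \ZZ = s\ZZ$ and choosing $J$ with $IJ = s\cO$, it deduces $s^{-1}jz \in L$ for all $j \in J$, so primitivity forces $J \subseteq s\cO$ and hence $I = \cO$. You instead show that a primitive vector spans a direct summand (because $L/\cO z$ is finitely generated and torsion-free, hence projective over a Dedekind domain and free over $\cH$), extract a functional $f \in L'$ with $f(z)=1$, and use $pL' = L$ to place $\bar p = \ip{pu}{z}$ inside $\ip{L}{z}$. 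Both arguments rest on the same two inputs --- primitivity in the form ``$cz \in L \Rightarrow c \in \cO$'' and the structure theory of modules/ideals over $\cO$ --- but yours makes the role of primitivity more transparent (it is precisely the condition that evaluation at $z$ maps $\op{Hom}_{\cO}(L,\cO)$ onto $\cO$) and treats the Dedekind and quaternionic cases uniformly, whereas the paper's stays at the level of ideal arithmetic and never invokes a splitting. The one point to keep straight in your version is the identification of $\op{Hom}_{\cO}(L,\cO)$ with a submodule of $L\otimes_{\cO}\op{Frac}(\cO)$ via the sesquilinear form (linear versus conjugate-linear slot); since $\ip{z}{u}=1$ if and only if $\ip{u}{z}=1$ by Hermitian symmetry, this bookkeeping does not affect the conclusion $\ip{pu}{z}=\bar p\,\ip{u}{z}=\bar p$, and your final step correctly uses that $\ip{L}{z}$ is an $\cO$-submodule of $\cO$ together with $\bar p\cO = p\cO$.
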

\begin{proof} The lemma holds when $\cO = \cH$ is the ring of Hurwitz integers since
every ideal in $\cH$ is principal. Otherwise, we may assume that $\cO$ is a Dedekind domain.
Suppose $\ip{L}{ p^{-1} z} = I$ is a proper ideal in $\cO$. 
Suppose $I \cap \ZZ = s \ZZ$. 
There exists an ideal $J$ such that $I J = s \cO$.
Then, for all $ j \in J$, we have
 $\ip{L}{s^{-1} p^{-1} j z } \subseteq s^{-1} j I \subseteq s^{-1} JI  \subseteq \cO.$
It follows that $s^{-1} p^{-1} j z \in L' = p^{-1} L$, 
 so $s^{-1} j z \in L$ for all $j \in J$. Since $z$ is primitive, $s^{-1} j \in \cO$ for all $ j \in J$, so
 $J \subseteq s \cO$. But $I J = s \cO$, so $I = \cO$.
\end{proof}
\begin{lemma}
Let $(\cO, p)$ be as in \ref{intro1}.
Let $L$ be a $p$-modular Lorentzian Hermitian $\cO$-lattice.
Let $z$ be a primitive norm $0$ vector in $L$. Then $L$ splits off a hyperbolic cell containing $z$, that is,
there exists a lattice $H$ of signature $(1,1)$
containing $z$ such that $L = H \oplus \Lambda$ for a definite lattice $\Lambda \simeq z^{\bot} /z $.
Further, $\Lambda$ is also $p$-modular.
\label{l-mod}
\end{lemma}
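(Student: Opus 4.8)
The plan is to produce a single isotropic vector $w \in L$ with $\ip{z}{w} = p$, let $H = \s_{\cO}\{z, w\}$ be the rank-two sublattice it spans together with $z$, and show that $L = H \oplus H^{\bot}$ as an orthogonal direct sum; then $\Lambda := H^{\bot}$ is the desired definite lattice. Since $\ip{z}{z} = 0$, $\ip{z}{w} = p$ and $\ip{w}{w} = 0$, the Gram matrix of $H$ is $\SM{0}{p}{\bar p}{0}$, of determinant $-q \neq 0$; hence $H$ is non-degenerate of signature $(1,1)$, a hyperbolic cell.

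First I would construct $w$. By Lemma \ref{l-dedekind} applied to the primitive vector $z$ we have $\ip{z}{L} = p\cO$, so there is $w_0 \in L$ with $\ip{z}{w_0} = p$. I then adjust within $\cO z$: for $\lambda \in \cO$ the vector $w_0 + \lambda z$ still pairs with $z$ to give $p$, while $\ip{w_0 + \lambda z}{w_0 + \lambda z} = N_0 + \lambda \bar p + \bar\lambda p$, where $N_0 = \ip{w_0}{w_0}$. The crucial observation, and the main obstacle, is that this norm can be annihilated. Here one uses that a $p$-modular lattice satisfies $\ip{L}{L} \subseteq p\cO$: writing any $y \in L = p L'$ as $y = p y'$ with $y' \in L'$ gives $\ip{x}{y} = p\ip{x}{y'} \in p\cO$ for all $x \in L$. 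In particular $N_0 \in p\cO \cap \ZZ = q\ZZ$, and a direct computation shows that $\{\lambda \bar p + \bar\lambda p : \lambda \in \cO\}$ is exactly $q\ZZ$ in the imaginary quadratic and quaternionic cases. Hence $\lambda$ can be chosen so that $w := w_0 + \lambda z$ is isotropic. This $w$ is automatically primitive: if $w = m w'$ with $m$ a non-unit, then $m$ divides $\ip{z}{w} = p$, forcing $\ip{z}{w'}$ to be a unit, contradicting $\ip{z}{L} \subseteq p\cO$.

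Next I would establish the splitting. Since $w$ is primitive and isotropic, Lemma \ref{l-dedekind} also gives $\ip{w}{L} = p\cO$. For $v \in L$ write its orthogonal projection onto $H \otimes_{\cO} \op{Frac}(\cO)$ as $v_H = \alpha z + \beta w$. Solving $\ip{z}{v_H} = \ip{z}{v}$ and $\ip{w}{v_H} = \ip{w}{v}$ yields $\beta = \ip{z}{v}/p$ and $\alpha = \ip{w}{v}/\bar p$. Because $\ip{z}{v} \in p\cO$ and $\ip{w}{v} \in p\cO = \bar p \cO$, both $\alpha, \beta \in \cO$, so $v_H \in H$ and $v - v_H \in H^{\bot}$. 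As $H$ is non-degenerate, $H \cap H^{\bot} = 0$, whence $L = H \oplus H^{\bot}$. Put $\Lambda = H^{\bot}$; comparing signatures, $\Lambda$ has signature $(0, n-1)$ and is negative definite. Moreover $z^{\bot} \cap H = \cO z$ (as $\ip{z}{az + bw} = bp$), so the same projection gives $z^{\bot} = \cO z \oplus H^{\bot}$ and therefore $\Lambda \simeq z^{\bot}/z$.

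It remains to check that $\Lambda$ is $p$-modular, which I would deduce from a general fact about orthogonal sums: for $L = H \oplus \Lambda$ one has $L' = H' \oplus \Lambda'$, so $p L' = pH' \oplus p\Lambda'$; since $pL' = L = H \oplus \Lambda$ and the two decompositions use the same pair of orthogonal subspaces, $pH' = H$ and $p\Lambda' = \Lambda$. Thus $\Lambda$ is $p$-modular. The only genuinely delicate point in the whole argument is the norm-killing step, i.e. verifying that the attainable shifts $\lambda\bar p + \bar\lambda p$ exhaust $q\ZZ$ while $N_0$ already lies there; everything else is formal once $w$ has been chosen isotropic and primitive.
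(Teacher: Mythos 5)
Your overall strategy is the paper's own: produce $f\in L$ with $\ip{z}{f}=p$ via Lemma \ref{l-dedekind}, set $H=\cO z+\cO f$, check that the orthogonal projection of $L$ onto $H\otimes\op{Frac}(\cO)$ lands in $H$ because every inner product in a $p$-modular lattice lies in $p\cO$, conclude $L=H\oplus H^{\bot}$ and $z^{\bot}/z\simeq H^{\bot}$, and finally transfer $p$-modularity to $\Lambda=H^{\bot}$. All of those steps in your write-up are correct, and your argument for $p\Lambda'=\Lambda$ (split $L'=H'\oplus\Lambda'$ along the same pair of orthogonal subspaces and compare with $pL'=H\oplus\Lambda$) is in fact a touch cleaner than the paper's extension-by-zero argument.

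The one place you depart from the paper is the step you yourself call the ``only genuinely delicate point'': normalizing $w$ to be isotropic. That step is unnecessary, and as stated it is also the one step that does not hold in the generality of \ref{intro1}. The lattice $H=\cO z+\cO f$ with Gram matrix $\SM{0}{p}{\bar{p}}{\abs{f}^2}$ already has determinant $-q\neq 0$ and signature $(1,1)$, so it is a hyperbolic cell for the purposes of the lemma; and the projection of $v$ onto $H$ has coefficients built from $\ip{z}{v}\in p\cO$, $\ip{f}{v}\in \bar{p}\cO$ and $\abs{p}^{-2}\abs{f}^2=\abs{f}^2/q\in\ZZ$, all of which follow from $\ip{L}{L}\subseteq p\cO$ without changing $f$ (this is exactly what the paper's formula for $\pi_H$ does, via the extra term involving $\abs{f}^2$). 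Moreover, your claim that $\lbrace \lambda\bar{p}+\bar{\lambda}p : \lambda\in\cO\rbrace$ equals $q\ZZ$ depends on the ring: it is true for $\cO=\ZZ[(1+\sqrt{-q})/2]$ and for the Hurwitz order, but for example for $\cO=\ZZ[\sqrt{-2}]$ and $p=\sqrt{-2}$ one has $\lambda\bar{p}+\bar{\lambda}p=\Tr(\lambda\bar{p})\in 2q\ZZ$, so a norm $N_0\equiv q\bmod 2q$ could not be killed. Since \ref{intro1} allows any imaginary quadratic ring of integers, you should either justify the trace computation for the specific $(\cO,p)$ you use or, better, drop the isotropy normalization entirely; with that change your proof coincides with the paper's.
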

\begin{proof}
Lemma \ref{l-dedekind} implies that there exists $f \in L$ such that $\ip{z}{f} = p$. Then $H = \cO z + \cO f$
is a hyperbolic cell. Note that $\pi_H$, given by
\begin{equation*} 
\pi_{H}(x) =  ( \bar{p}^{-1}\ip{f}{x} - \abs{p}^{-2}\abs{f}^2  \;  \ip{z}{x} ) z + p^{-1} \ip{z}{x} f
 \end{equation*}
 is the orthogonal projection of $L\otimes \op{Frac}(\cO)$ to $H \otimes \op{Frac}(\cO)$ and $\pi_{H}$ maps $L$
 into $H$.
It follows that $L  = H \oplus \Lambda$, where $\Lambda = H^{\bot}$. So $z^{\bot} = \Lambda \oplus \cO z$ and
$z^{\bot}/z \simeq \Lambda$.
\par
It remains to see that $p \Lambda' = \Lambda$. If $\lambda \in \Lambda$, then $\ip{\lambda}{L} \in p \cO$, so
$\ip{\lambda}{\Lambda} \in p \cO$, that is $\Lambda \subseteq p \Lambda'$. Suppose $\phi \in \Lambda'$.
Since $L = \Lambda \oplus H$, we can extend $\phi$ to an element of $L'$ by defining $\phi$ to be $0$
on $H$. Since $L' = p^{-1}L$, there exists $x \in L$ such that $\phi(\cdot) = \ip{p^{-1}x }{\cdot}$.
But then $\phi(\lambda) = \ip{p^{-1} (x - \pi_H(x))}{ \lambda}$ for all $\lambda \in \Lambda$
and $(x - \pi_H(x)) \in \Lambda$. 
\end{proof}
\begin{topic}{ \bf Positive definite modular lattices with $\op{PGL}(3,\FF_{q})$ symmetry: }
\label{posmod}
Let $(\cO, p,q)$ be as in \ref{intro1} and $L_{q}$ be the lattice defined in \ref{def-Lq} from this data.
Suppose $L_q$ has a primitive norm zero vector $z$ fixed by $\op{PGL}(3, \FF_{q})$.
Suppose $g \in \op{PGL}(3, \FF_{q})$ acts trivially on $z^{\bot}/z$. Since $g$ has
finite order, it must fix $z^{\bot}$ point-wise. But $g$ also point-wise fixes
the span of $w_{\cP}$ and $w_{\cL}$. So $g$ must be trivial. So the automorphism
group of $z^{\bot}/z$ contains $\op{PGL}(3, \FF_{q})$.
It follows that $z^{\bot}/z$ is a positive definite $(n-1) = q^{2} + q$ dimensional $\cO$-lattice,
whose automorphism group contains $\op{PGL}(3, \FF_{q})$. If $L$ is $p$-modular,
then lemma \ref{l-mod} implies that the positive definite lattice $z^{\bot}/z$ is also $p$-modular.
\par
For example, if $\cO = \ZZ[ \tfrac{1 + \sqrt{-3}}{2}]$ and $q = 3$, then we may take
$z = w_{\cP} + \tfrac{1}{2}(-1 + p)\bar{p} w_{\cL}$.
To find more examples of $z$ (see \ref{examples2}), we need the lemma below.
\end{topic}
\begin{lemma}
\label{l-ternary}
Let $q \equiv 3 \bmod 4$ be a rational prime, $n = q^2 + q + 1$. If $p$ is a rational prime,
write $n = p^{v_{p}(n)} m$ with $p \nmid m$.
Then the following are equivalent:
\par
(a) The integer $n$ is a norm of some element in $\QQ[\sqrt{-q}]$.
\par
(b) The ternary quadratic form 
\begin{equation} 
z^2 + q x^2 - n y^2
\label{eq-tern}
 \end{equation}
represents $0$ over $\ZZ$.
\par
(c) If $v_p(n)$ is odd for some rational prime $p$, then $p \equiv 1 \bmod 4$.
\end{lemma}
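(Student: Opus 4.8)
The plan is to prove $(a)\Leftrightarrow(b)$ by a direct homogenization, and then to establish $(a)\Leftrightarrow(c)$ via the Hasse principle, reducing everything to a computation of Hilbert symbols place by place. For $(a)\Leftrightarrow(b)$ I would note that the norm form of $K=\QQ[\sqrt{-q}]$ is $N(a+b\sqrt{-q})=a^2+qb^2$, so $(a)$ asserts that $n=a^2+qb^2$ has a rational solution. Clearing denominators, this is the same as asking the ternary form $z^2+qx^2-ny^2$ to have a nontrivial integral zero with $y\neq 0$; and since $q>0$, any zero with $y=0$ forces $z=x=0$. Hence $(a)\Leftrightarrow(b)$ is immediate.

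For $(a)\Leftrightarrow(c)$ I would invoke the Hasse norm theorem for the (cyclic) quadratic extension $K/\QQ$: the integer $n$ is a global norm if and only if it is a local norm at every place $v$, which holds if and only if the Hilbert symbol $(n,-q)_v=1$ for all $v$. I would then evaluate these symbols. At $v=\infty$ the symbol is $1$ because $n>0$. At $v=q$ it is $1$ because $n\equiv 1\bmod q$ is a $q$-adic unit congruent to $1$, hence a square in $\QQ_q$. For an odd prime $p\neq q$ with $p\nmid n$ both entries are units, so the symbol is $1$. The only interesting primes are the odd divisors $p$ of $n$, where writing $n=p^{v_p(n)}m$ with $p\nmid m$ gives $(n,-q)_p=\left(\frac{-q}{p}\right)^{v_p(n)}$. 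The key arithmetic input is that for every $p\mid n$ one has $\left(\frac{-q}{p}\right)=\left(\frac{-1}{p}\right)$: since $n\mid q^3-1$ we get $q^3\equiv 1\bmod p$, and because $q^2$ is a nonzero square modulo $p$ (note $p\nmid q$ as $n\equiv 1\bmod q$), $\left(\frac{-q}{p}\right)=\left(\frac{-q^3}{p}\right)=\left(\frac{-1}{p}\right)$. Therefore $\left(\frac{-q}{p}\right)=1$ exactly when $p\equiv 1\bmod 4$, and the local condition at $p$ reads ``$v_p(n)$ even, or $p\equiv 1\bmod 4$,'' which is precisely clause $(c)$ for odd $p$.

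The remaining step, and the one I expect to be the main obstacle, is the prime $p=2$, where a direct $2$-adic Hilbert symbol computation is awkward. I would sidestep it as follows. First, $n=q^2+q+1$ is a sum of three odd terms, hence odd, so $v_2(n)=0$ is even and the $p=2$ clause of $(c)$ is vacuous. Second, once $(c)$ guarantees that all symbols at $\infty$, at $q$, and at the odd primes equal $1$, the product formula $\prod_v(n,-q)_v=1$ forces $(n,-q)_2=1$ as well, so $(c)\Rightarrow(a)$. Conversely $(a)$ makes every local symbol equal to $1$, so whenever $v_p(n)$ is odd we must have $\left(\frac{-q}{p}\right)=1$ and hence $p\equiv 1\bmod 4$, giving $(c)$. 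Thus the genuine difficulty is purely organizational: arranging the Hilbert-symbol bookkeeping so that the troublesome dyadic place is never evaluated by hand, which the product formula makes clean.
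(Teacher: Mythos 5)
Your proof is correct, and for the substantive direction it uses the same engine as the paper: reduce to local conditions, compute the Hilbert symbols at the odd places, and invoke the product formula precisely so that the dyadic symbol never has to be evaluated. The differences are in the supporting steps. For the implication (a)/(b) $\Rightarrow$ (c), the paper avoids local symbols entirely: it takes a primitive integral zero $(z,x,y)$ of $z^2+qx^2-ny^2$ and, for a prime $p\equiv 3\bmod 4$ with $v_p(n)$ odd, derives a contradiction by a direct count of $p$-adic valuations, using $q^3\equiv 1\bmod p$ to rewrite $z^2+qx^2\equiv 0$ as $z^2+(q^{-1}x)^2\equiv 0\bmod p$; you obtain the same conclusion more uniformly from $(n,-q)_p=\bigl(\tfrac{-q}{p}\bigr)^{v_p(n)}$, which is arguably cleaner since the machinery is already set up for the other direction. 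For (c) $\Rightarrow$ (a), the paper gets the key identity $(-q,n)_p=(-1,n)_p$ in one stroke by noting that $n=(q+1)^2-q$, so $z^2-qx^2-ny^2$ has the global zero $(q+1,1,1)$ and hence $(q,n)_p=1$ at every place simultaneously; you instead verify each place by hand ($\infty$, $q$, primes prime to $n$, odd primes dividing $n$), using $q^3\equiv 1\bmod p$ to see that $q$ is a square modulo any $p\mid n$. Both computations are valid; the paper's global-point trick is shorter, while your place-by-place analysis is more explicit. Your remarks that $v_2(n)=0$ makes the $p=2$ clause of (c) vacuous, and that the product formula forces $(n,-q)_2=1$, coincide exactly with the paper's handling of the prime $2$.
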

\begin{proof}
The equivalence of (a) and (b) is clear. Assume (c).
Let $(z,x,y)$ be a zero of the ternary form \eqref{eq-tern} over $\ZZ$ such that the greatest common divisor of $x,y$ and $z$
is equal to $1$. Let $p$ be a prime; $p \equiv 3 \bmod 4 $. Suppose, if possible $v_p(n) = 2 r + 1$. 
Since $q^{3} \equiv 1 \bmod p$,  
\begin{equation*} 
z^{2} + (q^{-1} x)^{2} \equiv z^{2} + q^{3} (q^{-1}x)^{2} \equiv z^{2} + q x^{2}  \equiv 0 \bmod p,
 \end{equation*}
where $q^{-1}$ denotes the inverse of $q$ modulo $p$.
The equation $Z^{2}  + X^{2} = 0$ has no nontrivial solution in $\FF_{p}$. So $p$ must divide both $z$ and $x$
and hence $p$ does not divide $y$. So $v_{p}(z^{2} + q x^{2}) = v_p(n) = 2 r +1$.
If $v_{p}(z) \neq v_{p}(x)$, then $v_{p}(z^{2} + q x^{2})  = 2 \min\lbrace v_{p}(z), v_{p}(x) \rbrace$
is even, which is not possible. So let $v_{p}(z) = v_{p}(x) = j$. 
Writing $z = p^j z_{1}$ and $x= p^j x_{1}$ we find that 
$v_{p}(z_{1}^{2} + q x_{1}^{2}) + 2j = 2r + 1$, so 
$v_{p}(z_{1}^{2} + q x_{1}^{2}) > 0$. But this again leads to
\begin{equation*} 
z_{1}^{2 } + (q^{-1} x_{1})^{2} \equiv z_{1}^{2} + q x_{1}^{2} \equiv 0 \bmod p,
 \end{equation*}
 which is a contradiction, since $p$ does not divide $z_{1}$ and $x_{1}$. Thus (b) implies (c).
  \par
 Conversely, assume (c). The ternary form represents $0$ over $\RR$, so it suffices to check that
 it represents $0$ over $\QQ_p$ for all but one rational prime $p$, that is, the local Hilbert symbols
 $(-q, n)_p = 1$. Because of the product formula (\cite{SJP:CIA} Ch. 3, theorem 3, pp. 23)
 we can omit one prime. 
 \par
 Note that $(q+1, 1, 1)$ is a nontrivial solution to $(z^2 - q x^2 - n y^2) = 0$ over $\ZZ$, so
 $(q, n)_p = 1$ for all prime $p$. So, for $p \neq 2$, using \cite{SJP:CIA} Ch. 3, theorem 1, pp. 20, we get
  \begin{equation*}
 (-q, n)_p = (-1,n)_p (q,n)_p = (-1,n)_p = \Bigl( \frac{-1}{p} \Bigr)^{v_p(n)}.
 \end{equation*}
 But our assumption states that if $v_p(n)$ is odd, then $p\equiv 1 \bmod 4$, so $-1$ is a quadratic
 residue modulo $p$.
\end{proof}
\begin{remark} Suppose $q \neq 3$ is a prime such that 
the conditions of lemma \ref{l-ternary} are satisfied.
If $q$ is of the form $3k + 1$, then
$n = 9k^2 + 9 k + 3$, so $v_3(n) = 1$, which is not possible. So if $q \neq 3$, then we must have $q \equiv -1 \bmod 12$.
The first few primes $q$ satisfying the conditions in \ref{l-ternary}, (d) are $q = 3, 47, 59, 71, 131$.
For example, if $q = 3$, then $(z,x,y) = (1,2,1)$ is a solution to \eqref{eq-tern}. 
If $q = 47$, then $(z,x,y) = (47, 27, 4)$ is a solution.
General conjectures like Schinzel's hypothesis H or Bateman-Horn conjecture
imply that there are infinitely such primes.
\end{remark}
\begin{example}
\label{examples2}
(1) Let $(\cO, p, q)$ be as in \ref{examples1}(1). One verifies that
the group $\op{PGL}(3, \FF_q)$ fixes a two dimensional subspace of $L_{q} \otimes_{\cO} \QQ[\sqrt{-q}]$
spanned by $w_{\cP}$ and $w_{\cL}$. 
So $L_q$ contains a norm zero vector fixed by $\op{PGL}(3, \FF_q)$
if and only if $(w_{\cP} + c w_{\cL})$ has norm zero for some
$c \in \QQ[\sqrt{-q}]$. Using equation \eqref{eq-ipwpwl}, one verifies that
\begin{equation*}
\abs{w_{\cP} + c w_{\cL}  }^2  = \abs{c p + q + 1}^2 - (q^2 + q + 1).
\end{equation*}
So $\abs{w_{\cP} + c   w_{\cL}}^2 = 0$ if and only if  $(q^2 + q + 1)$ is a norm of some element of $ \QQ[\sqrt{-q}]$.
\par
Suppose $q$ is such that the conditions in lemma \ref{l-ternary} hold.
Let $z$ be a primitive norm zero vector in $L_q$ fixed by $\op{PGL}(3, \FF_q)$.
By lemma \ref{l-dedekind}, there exists a lattice vector $f$ such that
$\ip{z}{f} = p$. So we can take $ H = \cO z + \cO f$. 
Writing $L_q = \Lambda_q \oplus H$ as in \ref{l-mod}, we get a $p$-modular Hermitian
lattice $\Lambda_q$ defined over $\ZZ[(1 + p)/2]$, whose automorphism group contains $\op{PGL}(3, \FF_q)$.
For $q  = 3$ we find that $\Lambda_q$ is the Leech lattice defined over Eisenstein integers.
\par
(2) Let $\cO = \cH$ be the ring of Hurwitz integers, $p = (1 - i)$ and $q = 2$. Let $L_2^{\cH}$ be the lattice
obtained from this data. 
(Of course in this case one has to be careful to phrase everything in terms of right modules or left modules.)
The reflection group of $L_2^{\cH}$ was studied in \cite{TB:QL} where we always considered right $\cH$-modules.
One checks that $z = w_{\cP} + w_{\cL} \bar{p} (-1 + i + j + k)/2 $ is a primitive null vector in $L_2^{\cH}$.
So we can write $L_2^{\cH} = \Lambda \oplus H$, so that
$\Lambda \simeq z^{\bot}/z$ and $H$ is a hyperbolic cell. The lattice $\Lambda$ is a quaternionic form of
Leech lattice defined of Hurwitz integers.
\end{example}
\begin{topic}{\bf  Even unimodular positive definite $\ZZ$-lattices with $\op{PGL}(3, \FF_q)$ symmetry:}
\label{unimod}
Let $q$ be a rational prime; $q\equiv 3 \bmod 4$. Suppose $q$ satisfies the
conditions in lemma \ref{l-ternary}.
Let $\Lambda_q$ be the definite Hermitian $\cO$-lattice from \ref{examples2}(1).
Let $M_q$ be the underlying $\ZZ$-module of $\Lambda_q$ with the 
integral bilinear form 
\begin{equation*}  
(x,y) = -2q^{-1} \op{Re}\ip{x}{y}.
\end{equation*}
Then $M_q$ is a positive definite, even, unimodular $\ZZ$-lattice
of dimension $2q(q+1)$ such that $\op{Aut}(M_q) \supseteq \op{PGL}(3, \FF_q)$.
If $q =3 $, then $M_q$ is the Leech lattice. 
\begin{proof}[proof that $M_q$ is unimodular:]
Identify the vector spaces $\Lambda_q \otimes_{\cO} \QQ(\sqrt{-q})$ and $M_q \otimes_{\ZZ} \QQ$.
All the lattices in question can be identified inside this vector space.
Suppose $\mu \in M_q'$. Let $\ip{\mu}{y} = (u  + p v)/2$ with $u, v \in \RR$.
Since $( \mu, y) = -2 q^{-1}\op{Re}\ip{\mu}{y} \in \ZZ$, 
we have $u \in q \ZZ$. Also  
\begin{equation*} 
(\mu, \tfrac{(1 + p)}{2} y )  = -2 q^{-1} \op{Re} \ip{\mu}{ \tfrac{(1 + p)}{2} y } = (q v - u)/2 q \in \ZZ.
 \end{equation*}
 So $v \in q^{-1} u + 2 \ZZ$. It follows that $v \in \ZZ$ and $u \equiv v \bmod 2$. So 
 $\ip{\mu}{y}  \in p \cO$. So $p^{-1} \mu \in {\Lambda}_q' = p^{-1} {\Lambda}_q$, so $\mu \in {\Lambda}_q$,
 that is $\mu \in M_q$.
\end{proof}
\end{topic}
%
%*******************************************************************************************************
%
\section{Arithmetic of finite fields and finite projective geometry}
\label{s-inc}
\begin{topic}
In this section we shall explain how to write down the incidence matrix
of $\PP^2(\FF_q)$ from the observation:
$\PP^2(\FF_q) \simeq \FF^*_{q^3}/\FF^*_q.$
This lets us write down the vectors $x_i$ explicitly in line coordinates (see \ref{t-linec}).
This also provides some of the ingredients of the proof of \ref{th-GM}.
The results in this section are contained in \cite{SJ:FP}. But since that paper goes
back to 1934, obviously we can make the argument quicker. We include it for the sake of completeness.
\end{topic}
\begin{theorem}[\cite{SJ:FP}]
There is an element $T$ of $\op{PGL}(r+1, \FF_q)$ that acts transitively on $\PP^r(\FF_q)$.
The points and hyperplanes in $\PP^r(\FF_q)$ can be labeled as $\lbrace x_i \colon i \in \ZZ/n \ZZ \rbrace$
and  $\lbrace l_i \colon i \in \ZZ/n \ZZ\rbrace$ such that $T l_{i} = l_{i+1}$ and $T x_i = x_{i+1}$
for all $i \in \ZZ/n \ZZ$. So if
\begin{equation} 
l_0 = \lbrace x_{d_0}, x_{d_1}, \dotsb, x_{d_q} \rbrace,
\label{eq-l_0}
 \end{equation}
 Then
 \begin{equation*} 
l_{j} =\lbrace x_{d_0 + j}, x_{d_1 + j}, \dotsb, x_{d_q+j} \rbrace; \text{\; for \;} j = 0, 1, 2, \dotsb, {n-1}.
 \end{equation*}
\label{th-trans}
\end{theorem}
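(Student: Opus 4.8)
The plan is to realize $T$ as a \emph{Singer cycle}, exploiting the identification $\PP^r(\FF_q) \simeq \FF^*_{q^{r+1}}/\FF^*_q$ recorded above. First I would regard $\FF_{q^{r+1}}$ as an $(r+1)$-dimensional vector space over $\FF_q$, so that a point of $\PP^r(\FF_q)$ is a one-dimensional subspace $\FF_q a$ with $a \in \FF^*_{q^{r+1}}$; two such coincide exactly when the generators differ by a scalar in $\FF^*_q$, which identifies the point set with the quotient group $\FF^*_{q^{r+1}}/\FF^*_q$, cyclic of order $n = (q^{r+1}-1)/(q-1)$.

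Next I would pick a generator $\gamma$ of the cyclic group $\FF^*_{q^{r+1}}$. Multiplication by $\gamma$ is $\FF_q$-linear, hence lies in $\op{GL}(r+1,\FF_q)$; let $T$ be its image in $\op{PGL}(r+1,\FF_q)$. On points, $T$ acts as multiplication by the class of $\gamma$ in $\FF^*_{q^{r+1}}/\FF^*_q$. Since $\gamma$ generates $\FF^*_{q^{r+1}}$, its class generates the quotient, so this action is simply transitive; moreover $T$ has order exactly $n$, because $T^k = 1$ in $\op{PGL}$ iff the matrix of multiplication by $\gamma^k$ is scalar, iff $\gamma^k \in \FF^*_q$, iff $n \mid k$. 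Choosing a base point $x_0$ and setting $x_i = T^i x_0$ gives the labeling with $T x_i = x_{i+1}$.

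The one genuine point is transitivity on hyperplanes, which I would handle with the trace form. Equip $\FF_{q^{r+1}}$ with the nondegenerate symmetric $\FF_q$-bilinear pairing $\langle a,b\rangle = \Tr_{\FF_{q^{r+1}}/\FF_q}(ab)$; then every hyperplane is $b^{\bot} = \lbrace a : \Tr(ab) = 0\rbrace$ for a class $[b]$ determined up to $\FF^*_q$, which identifies the set of hyperplanes with $\FF^*_{q^{r+1}}/\FF^*_q$ as well. Because multiplication is commutative, $\langle \gamma a, b\rangle = \langle a, \gamma b\rangle$, so multiplication by $\gamma$ is self-adjoint for $\langle\;,\;\rangle$, and a one-line computation gives $T(b^{\bot}) = (\gamma^{-1} b)^{\bot}$. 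Thus under trace duality $T$ acts on hyperplanes as multiplication by $\gamma^{-1}$; since $\gamma^{-1}$ is again a generator of $\FF^*_{q^{r+1}}$, this action is also simply transitive, and I set $l_j = T^j l_0$ for a chosen base hyperplane $l_0$.

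Finally, since $T$ is a collineation it preserves incidence, so $x_i \in l_j$ if and only if $T x_i \in T l_j$, i.e.\ $x_{i+1} \in l_{j+1}$. Writing the base line as $l_0 = \lbrace x_{d_0}, \dotsb, x_{d_q}\rbrace$ and applying $T^j$ using $l_j = T^j l_0$ together with $x_i = T^i x_0$ yields $l_j = \lbrace x_{d_0 + j}, \dotsb, x_{d_q + j}\rbrace$ at once. The main obstacle is precisely the hyperplane statement; the self-adjointness of multiplication with respect to the trace form is exactly what makes the dual Singer action transparent and reduces it to the (already established) regularity of the action on points.
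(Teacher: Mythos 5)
Your proposal is correct and follows essentially the same route as the paper: both realize $T$ as a Singer cycle via $\PP^r(\FF_q) \simeq \FF^*_{q^{r+1}}/\FF^*_q$ and handle the hyperplanes through the trace pairing $(a,b) \mapsto \Tr(ab)$, your observation that $T$ acts on hyperplanes as multiplication by $\gamma^{-1}$ being exactly the paper's relabeling $l_i = \sigma(x_{n-i})$. The only cosmetic difference is that you work with general $r$ while the paper writes out the case $r=2$ and notes the general case is identical.
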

\begin{proof} We shall write the proof for $r = 2$. The general proof
works with obvious modifications.
Pick a generator $\lambda$ for the multiplicative group $\FF^*_{q^3}$.
Identify $\FF^*_q$ inside $\FF^*_{q^{3}}$ as
$$\FF^*_q  = \lbrace 1, \lambda^n, \lambda^{2n} \dotsb, \lambda^{(q - 2)n} \rbrace, \text{\; where \;} n = (q^3 -1)/(q - 1) = 1 + q + q^2.$$
So $\PP^2(\FF_q)$ obtains a structure of an cyclic group; we identify $\PP^2(\FF_q) = \FF^*_{q^{3}}/\FF^*_q$. We shall write 
\begin{equation} 
\PP^2(\FF_q) = \lbrace x_0, x_1, \dotsb, x_{n-1} \rbrace   \text{\; where \;} x_i = \lambda^i \FF_q^*.
\label{eq-defxi}
 \end{equation}
 Note that $x \mapsto \lambda x$ is a $\FF_q$-linear endomorphism of $\FF_{q^{3}}$, which is transitive on $\FF^*_{q^3}$.
 This map defines an element $T$ of $\op{PGL}(3, \FF_q)$ that is transitive on $\PP^2(\FF_q)$. 
\par
We identify the dual vector space $(\FF_{q^3})'$ with $\FF_{q^3}$
using the trace form $(x, y) \mapsto \Tr(xy)$, where 
$\Tr: \FF_{q^3} \to \FF_q$ is the trace map: $\Tr(x) = x + x^q + x^{q^2}$.
This induces an isomorphism $\sigma$ from the points of 
$\PP^2(\FF_q)$ to the lines of $\PP^2(\FF_q)$. 
Let  $l_i = \sigma(x_{n-i})$, for $i = 0, 1, 2, \dotsb, n-1$.
So the points on the line $l_{i}$ correspond to the set of $x$'s such that
$\Tr(x_{n-i} x) = 0$.
Observe that $x \in l_{i}$ if and only if $\Tr(\lambda^{n-i} x) = 0$, if and only if 
$\Tr(\lambda^{-i}x)= 0$, if and only if $\lambda x \in l_{i+1}$.
So $T l_{i} = l_{i+1}$.
\end{proof}
\begin{definition}
A subset $\Delta \subseteq \ZZ/n \ZZ$ is called a {\it perfect difference set} if given any non-zero
$r \in \ZZ/n \ZZ$, there exists a unique ordered pair $(d, d')$
from $\Delta$ such that $(d - d') = r$.
\end{definition}
\begin{lemma} Assume the setup of lemma \ref{th-trans} with $r = 2$. 
Let $\Delta  = \lbrace d_0, d_1, \dotsb, d_q \rbrace$
be the set of indices corresponding to the points on the line $l_0$ of $\PP^2(\FF_q)$. Then 
$\Delta$ is a perfect difference set.
 \label{l-diff}
\end{lemma}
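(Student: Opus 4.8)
The plan is to show that the set of index-differences arising from the points on $l_0$ hits every nonzero element of $\ZZ/n\ZZ$ exactly once, and then observe that a simple counting argument upgrades this to the full perfect-difference-set property. First I would exploit the cyclic symmetry established in Lemma \ref{th-trans}: since $T$ acts transitively with $T x_i = x_{i+1}$ and $T l_i = l_{i+1}$, the line $l_j$ is the translate $\Delta + j$ of $\Delta = \{d_0,\dotsb,d_q\}$. The key incidence fact in $\PP^2(\FF_q)$ is that any two distinct points lie on a unique common line. I would translate this geometric statement into a statement about $\Delta$ as follows: two points $x_a$ and $x_b$ both lie on $l_j$ precisely when $a - j \in \Delta$ and $b - j \in \Delta$, i.e. when there exist $d_i, d_k \in \Delta$ with $a = d_i + j$ and $b = d_k + j$.

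Next I would fix a nonzero $r \in \ZZ/n\ZZ$ and count representations $r = d - d'$ with $d, d' \in \Delta$. Given such a representation, the pair of points $x_r$ and $x_0$ (equivalently any pair at index-distance $r$) both lie on the line $l_{-d'} = \Delta + (-d')$, since $x_0$ corresponds to $d' + (-d') = 0 \in \Delta$-shift and $x_r$ corresponds to $d + (-d') = r - 0$; more cleanly, a representation $r = d_i - d_k$ corresponds exactly to a line through the two points $x_0$ and $x_r$. Because distinct points determine a \emph{unique} line, there is exactly one line containing both $x_0$ and $x_r$, which forces exactly one ordered pair $(d_i, d_k)$ with $d_i - d_k = r$. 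This is the heart of the argument: the bijection between representations $r = d - d'$ and lines through a fixed pair of points at distance $r$, combined with the projective-plane axiom.

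To make the correspondence airtight I would proceed by the cleanest counting route, which avoids case analysis. The set $\Delta$ has $q+1$ elements, so the number of ordered pairs $(d, d')$ with $d \neq d'$ is $(q+1)q = q^2 + q = n - 1$. These pairs produce $n-1$ differences $d - d'$, all nonzero, landing in the $n - 1$ nonzero residues of $\ZZ/n\ZZ$. Thus it suffices to show no nonzero difference is repeated; injectivity then forces a perfect bijection by the pigeonhole principle. Suppose $d_i - d_k = d_{i'} - d_{k'} = r \neq 0$ for two ordered pairs. Each such equation gives a line through the same ordered pair of points $(x_0, x_r)$ via the translation structure, so by uniqueness of the line through two distinct points the two pairs must coincide. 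I expect the main obstacle to be pinning down this translation-to-line dictionary precisely — specifically verifying that a difference $r = d_i - d_k$ corresponds to the line $l_{-d_k}$ containing both $x_{d_i - d_k} = x_r$ and $x_0$, and that distinct differences would force two distinct lines through the same point pair. Once that dictionary is stated carefully, the rest is the clean $(q+1)q = n-1$ count matched against $n-1$ nonzero residues.
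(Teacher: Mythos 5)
Your proof is correct and is essentially the paper's argument: both establish that the map from ordered pairs of distinct elements of $\Delta$ to their differences is injective by invoking a projective-plane incidence axiom, and then conclude surjectivity from the count $q(q+1) = n-1$. The only cosmetic difference is that you use the dual axiom (two distinct points lie on a unique line, applied to $x_0$ and $x_r$) where the paper uses two distinct lines meeting in a unique point (applied to $l_0$ and $l_r$); your line-dictionary $r = d_i - d_k \leftrightarrow l_{-d_k}$ is the right one and in fact yields existence and uniqueness simultaneously.
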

\begin{proof}
 If $r = d_j - d_i = d_k - d_j \bmod n$ for three distinct elements $d_i, d_j, d_k \in \Delta$
then $x_{d_j}$ and $x_{d_k}$ belongs to both $l_0$ and $l_r$ which is impossible since
two lines intersect at one point. So the differences $(d_i - d_j)$ are all distinct modulo $n$
for all pair of distinct elements $(d_i, d_j)$ from $\Delta$. Since there are $q(q+1) = n -1$ such pairs,
the lemma follows.
\end{proof}
\begin{topic}{\bf Cyclic difference set codes and the incidence matrix of the projective plane: }
 Let $q = l^d$ be a power of a rational prime $l$.
Let $A$ be the $n \times n$ incidence matrix of $\PP^2(\FF_q)$. 
The $(i,j)$-th entry of $A$ is $1$ if $x_j \in l_i$ and is zero otherwise.
\par
Identify $\FF_q^n$ with $R = \FF_q[t]/\langle t^n - 1 \rangle$.
Linear cyclic codes of length $n$ over $\FF_q$ are
principal ideals in $R$. Consider the polynomial $\theta(t) = \sum_{d \in \Delta} t^d,$ 
where $\Delta$ is as in lemma \ref{l-diff}.
Theorem \ref{th-trans} implies that the $i$-th row of the incidence matrix $A$
corresponds to the polynomial $t^i \theta(t)$. 
So the span of the rows of the incidence matrix of $\PP^2(\FF_q)$ is isomorphic to
the principal ideal $R \theta(t)$ and theorem \ref{th-GM} is equivalent to the equality
$\op{dim}_{\FF_q}( R \theta(t) ) = \binom{ l + 1}{2}^d + 1.$
This is proved in \cite{GM:DCC}.
\end{topic}
\begin{example} Let $\lambda$ be a generator of $\FF_{27}^*$ such that 
$\lambda^3 = \lambda + 1$. Then the $13$ points of $\PP^2(\FF_3) = \FF^*_{27}/\FF^*_3$
are the cosets of $x_0 = 1, x_1 = \lambda, \dotsb, x_{12} = \lambda^{12}$. The $13$ lines are 
$l_0, \dotsb, l_{12}$ where the points on $l_i$ are the points $x$ such that $\Tr(\lambda^{-i} x) =0$.
So the points on $l_0$ correspond to the $x$'s  such that 
$\Tr(x) = x + x^3 + x^9 = 0$. 
\par
Clearly $\Tr(1) = 0$.  We have $\Tr(\lambda)= 0$, 
since the trace is the coefficient of the degree two term in the minimal polynomial
of $\lambda$. 
Since $\lambda^3$ and $\lambda^9$ are the Galois conjugates of $\lambda$, we have
$\Tr(\lambda^3) = \Tr(\lambda^9) = 0$. It follows that
$l_0 = \lbrace x_0, x_1, x_3, x_9 \rbrace$.
Now we can write down the the incidence graph of $\PP^2(\FF_3)$ using lemma \ref{th-trans}, and hence
write down the line coordinates for the vectors $x_0, \dotsb, x_{12}$ in $L_3$.
\end{example}
%
%
%
%*******************************************************************************************************
%
%
%*******************************************************************************************************
%
%
%
%*******************************************************************************************************
%
%

%
%*******************************************************************************************************
%
%
%*******************************************************************************************************
%
\end{document}